\patchcmd{\section}{\scshape}{\bfseries}{}{}
\renewcommand{\@secnumfont}{\bfseries}
\def\e{\epsilon}
\def\bp{\begin{proposition}}
\def\ep{\end{proposition}}
\def\bt{\begin{theo}}
\def\et{\end{theo}}
\def\be{\begin{equation}}
\def\ee{\end{equation}}
\def\bl{\begin{lemma}}
\def\el{\end{lemma}}
\def\bc{\begin{corollary}}
\def\ec{\end{corollary}}
\def\bd{\begin{definition}}
\def\ed{\end{definition}}
\def\barP{{\cal \bar{P}}}
\def\1{{\mathbbm 1}}
\newcommand{\x}[1]{{}$\kern-2\mathsurround${}\binoppenalty10000 \relpenalty10000 #1{}$\kern-2\mathsurround${}}
\newcommand{\tab}{\hspace*{2em}}
\newcommand*{\rom}[1]{\expandafter\@slowromancap\romannumeral #1@}
\DeclareRobustCommand*\cal{\@fontswitch\relax\mathcal}
\theoremstyle{plain}
\newtheorem{theorem}{Theorem}[section]
\newtheorem{lemma}{Lemma}[section]
\newtheorem{definition}{Definition}[section]
\newtheorem{corollary}{Corollary}[section]
\newtheorem{proposition}{Proposition}[section]
\newtheorem{problem}{Problem}[section]
\newtheorem{remark}{Remark}[section]
\numberwithin{equation}{section}
\begin{document}

\title[]{On algebraic properties of low rank approximations of Prony systems}

\author[G. Goldman]{Gil Goldman}
\address{Department of Mathematics,     The Weizmann Institute of Science, Rehovot 76100, Israel}
\email{gilgoldm@gmail.com}
\thanks{}

\author[Y. Yomdin]{Yosef Yomdin}
\address{Department of Mathematics,     The Weizmann Institute of Science, Rehovot 76100, Israel}
\email{yosef.yomdin@weizmann.ac.il}


\keywords{{\small Singularities, Signal acquisition, Non-linear models, Moments
inversion.}}

\subjclass[2000]{{\small 94A12, 62J02, 14P10, 42C99}}
\date{}

\begin{abstract}
	We consider the reconstruction of spike train signals of the form 
	$$F(x) = \sum_{i=1}^d a_i \delta(x-x_i),$$ 
	from their moments measurements $m_k(F)=\int x^k F(x) dx = \sum_{i=1}^d a_ix^k$. When some 
	of the nodes $x_i$ near collide the inversion becomes unstable. Given noisy moments measurements,
	a typical consequence is that reconstruction algorithms estimate the signal $F$ with a signal having fewer nodes, $\tilde{F}$.
	We derive lower bounds for the moments difference between a signal $F$ with $d$ nodes and a signal $\tilde{F}$ with strictly less nodes, $l$. 
	Next we consider the geometry of the non generic case
	of $d$ nodes signals $F$, for which there exists an $l<d$ nodes signal $\tilde{F}$, with moments 
	$$m_0(\tilde{F})=m_{0}(F),\ldots,m_{p}(\tilde{F})=m_{p}(F),\tab p>2l-1 .$$ 
	We give a complete description for the case of a general $d$, $l=1$ and $p=2$.           
	We give a reference for the case $p=2l-1$ which can be inferred from earlier work.
\end{abstract}

\maketitle

\section{Introduction}\label{Sec:Intro}
In this paper we consider the classical {\it Prony system} of algebraic equations
\begin{align}\label{eq.prony.system}
	\sum_{j=1}^d a_j x_j^k = m_k, && k= 0,1,\ldots,N,
\end{align}
with the unknowns $a_j,x_j, \
j=1,\ldots,d,$ and known right hand side formed by the moment ``measurements'' $m_0,\ldots,m_{N}$.
We will refer to the unknowns $a=(a_1,\ldots,a_d)$ as amplitudes and to the unknowns 
$x=(x_1,\ldots,x_d)$ as nodes.
    
\smallskip

Prony systems appear in many classical theoretical and applied mathematical problems
\cite{akinshin2015Rus,azais2015spike,batenkov2017accurate,condat2014new,batenkov2018accuracy,batenkov2017algebraic}.
In particular, the bibliography in \cite{auton1981investigation} contains more than 50 pages.
Explicit solution of Problem (\ref{eq.prony.system}) was given by Prony himself already in \cite{prony1795essai}.

\smallskip

Many of the more recent applications are in Signal Processing. 
As a very partial sample we mention that in
\cite{blu2008sparse} and in many other publications a method, essentially equivalent to solving a Prony system, 
was used in reconstructing signals with a ``finite rate of innovation''. 
In \cite{peter2013generalized,plonka2014prony} the applicability of Prony-type systems was extended to 
some new wide and important classes of signals.
In \cite{bernardi2014comparison, comon2008symmetric} multidimensional Prony systems were investigated via symmetric
tensors, in particular, connecting them to the polynomial Waring problem. 
In \cite{eldar2015sampling} Prony system appears in the general context of Compressed Sensing. 
In \cite{batenkov2015complete, batenkov2012algebraic} Prony-like systems were used in reconstructing 
piecewise-smooth functions 
from their Fourier data. Finally, in \cite{batenkov2015complete} the same reconstruction accuracy as 
for smooth functions was demonstrated (thus confirming the Eckhoff conjecture).

\smallskip

In what follows we will identify the unknown tuple $(a,x)$ with a ``spike-train signal'' $F$,

\be\label{eq.delta.signal}
	F(x)=\sum_{j=1}^{d}a_{j}\delta(x-x_{j}).
\ee
Clearly, the moments $m_k(F)=\int x^k F(x)dx, \ k=0,1,\ldots,$ 
are given by $m_k(F)=\sum_{j=1}^d a_j x_j^k$, so reconstructing $F$ 
from its $N$ initial moments is equivalent to solving (\ref{eq.prony.system}), with $m_k=m_k(F)$.

\smallskip

In practice it is important to have \emph{a stable method
of inversion} and many research efforts are devoted to this task (see
e.g. \cite{badeau2008performance,
batenkov2013accuracy,donoho2006stable,peter2011nonlinear,potts2010parameter,stoica1989music} and references therein).
A basic question here is the following.
We are given \emph{noisy} measurements $\nu=(\nu_{0},\dots,\nu_{N})$ with
\begin{align}\label{eq.prony.system.noise}
	|\nu_k - m_k| \le \e,& & k= 0,1,\ldots,N,
\end{align}
where $m_k$ are actual moments for some signal $F$.
The goal is to solve the Prony system \eqref{eq.prony.system} with right hand side $\nu$, so as to minimize the worst case
reconstruction error.

\smallskip

An important case that poses major mathematical and numerical difficulties
is when some of the nodes $x_j$ of the measured signal nearly collide. 
In particular, this happens in the context of the ``super-resolution problem'', 
which was investigated in many recent publications. See
\cite{akinshin2017accuracy,batenkov2016stability,batenkov2013accuracy,candes2014towards,demanet2015recoverability,salman2018prony,plonka2017approximation} as a small sample.


\smallskip

We now introduce the moments Hankel matrix which is important in the next calculations,
and is used in reconstruction algorithms that are based on Prony method.
Given a moments vector $m=\left(m_{0},\dots,m_{N}\right)$, with $N=2d-1,
d\in \mathbb{N}^+$, consider
the associated  $d\times d$ Hankel matrix $H_{d}(m)$,
\begin{equation}\label{eq.hankel.definition}
	H_{d}(m)=
	\begin{bmatrix}
		m_{0} & m_{1} & m_{2} & \dots & m_{d-1}\\
		m_{1} & m_{2} & m_{3} & \dots & m_{d}\\
		\adots & \adots & \adots & \adots & \adots\\
		m_{d-1} & m_{d} & m_{d+1} & \dots & m_{2d-2}
	\end{bmatrix}.
\end{equation}

We say that a signal $F$ as above, has $d$ nodes if its amplitudes $a_i,\ i=1,\ldots,d$ are non zero and 
the nodes are distinct.   
For exact measurements vector $m=(m_0,\ldots,m_N)$ of a signal $F$, 
the rank of the associated Hankel matrix $H_{d}(m)$ is equal to number of nodes of $F$.   
Given noisy moment measurements $\nu=(\nu_{0},\dots,\nu_{N})$, generalized Prony methods for
reconstruction of $F$ typically estimate the numerical rank $r$, of the associated Hankel matrix $H_d(\nu)$.
The next step is to recover $F$ from $\nu$ with the number of nodes equal to $r$. 

As the nodes collide the rank of $H_d(\nu)$ drops, effectively causing such methods of reconstruction
to estimate the source signal with a signal with less nodes. Typically, 
each cluster of nodes will be reduced to a single node.

\smallskip

In the present paper we consider two problems related to a low rank
approximation of Prony systems.
%

%
Denote by ${\cal P}={\cal P}_d$ the parameter space of signals $F$ with $d$ nodes,
$$
	{\cal P}_d=\left\{(a,x)=(a_1,\ldots,a_d,x_1,\ldots,x_d)\in {\mathbb R}^{2d}, \ x_1<x_2<\ldots<x_d, 
	\ a_i \ne 0,\ i=1,\ldots,d \right\}.
$$
For the sack of completion we define ${\cal P}_0$ to be the singleton containing the zero signal $F_0(x)=0$.      
Denote by ${\cal P}^a_d$ and by ${\cal P}^x_d$ the parameter spaces of the amplitudes $a$ and the nodes $x$,
respectively.
Finally denote by ${\cal M}={\cal M}_{d} \cong {\mathbb R}^{2d}$ the moment space 
consisting of the $2d$-tuples of the form $(\nu_0,\nu_1,\allowbreak\ldots,\nu_{2d-1})$.

\smallskip

The first main question of this paper, considered in section \ref{sec.exact}, is the following:

\begin{problem}\label{prob.low.rank}
	Given the triplet $(d,l,p)$ of natural numbers with $d > l > 0 $, describe the geometry of the set of
	all signals $F \in {\cal P}_d$  such that there exists a signal $\tilde{F}$
	with at most $l$ nodes, $\tilde{F} \in {\cal P}_i, \; i\le l$, satisfying
	\begin{align*}
		m_k(\tilde{F})=m_k(F), & & k=0,\ldots,p.
	\end{align*}
	That is, $\tilde{F}$ matches the $p+1$ initial moments of $F$. 
\end{problem}
\noindent For each triplet $(d,l,p)$ as in Problem \ref{prob.low.rank}, we will denote by $\Sigma_{d,l,p} \subset {\cal P}_d$
the set of all signals satisfying the condition of the Problem for this case.   	 
Fixing $p$ and $d$ and then varying $l$ naturally leads to a stratification of ${\cal P}_d$ according to the sets $\Sigma_{d,l,p}$.
%
%

\smallskip

Let $\hat\Sigma_{d,l,p}\subset {\cal P}_d\times {\cal P}_l$ be defined by the algebraic conditions 
$$
	m_k(F)=m_k(\tilde F), \tab k=0,\ldots,p.
$$ 
Then for $\pi:{\cal P}_d\times {\cal P}_l\to {\cal P}_d$, the projection to the first factor, we have
$$
\Sigma_{d,l,p}=\pi(\hat\Sigma_{d,l,p}).
$$
In particular, this implies that $\Sigma_{d,l,p}$ is a semi-algebraic subset of ${\cal P}_d$. 
Counting the parameters, we can expect that for $p > 2l-1$ it is generically of codimension $p-2l+1$. 
For $p=2l-1$ and $F\in {\cal P}_d$ the condition $F\in \Sigma_{d,l,p}$ is equivalent to the solvability of the Prony system 
$m_k(\tilde F)=m_k(F), \ k=0,\ldots,p,$ for signals $\tilde F\in {\cal P}_l$ {\it with real nodes and amplitudes}. 
These conditions can be given explicitly (see, e.g. \cite{batenkov2013geometry,salman2018prony}).

\smallskip

Our main result with respect to Problem \ref{prob.low.rank} is a complete description of the geometry of the set of signals 
meeting the condition of the Problem for the case $(d,1,2)$. See Theorems \ref{thm.quad.form} and \ref{thm.linear.subspaces}.
%

\smallskip

The second main question, considered in section \ref{sec.lower.bound},  
is to provide lower bounds for the {\it errors in the moments} which appears as the consequence of approximating a signal $F$ with 
$d$ nodes by signals with at most $d-1$ nodes. 
We give a bound of this form in terms of the minors of the moment Hankel matrix \eqref{eq.hankel.definition} formed by the moments of $F$.
See Theorems \ref{thm.moment.dif} and Corollary \ref{cor.lower.bound}. 
Finally, as a special case, we consider a situation where the nodes of $F$ form a cluster of a size $h\ll 1$.
 
\section{Exact moment fitting}\label{sec.exact}
%
%
%
%
%
%
%
In this section we consider Problem \ref{prob.low.rank} of exact fitting of the moments of a signal $F$ by a signal with strictly less nodes $\tilde{F}$.

\smallskip 

For each signal $F=(a,x) \in {\cal P}_d$, consider the $d \times d$ matrix $D=D(x)$, its i,j entry is given by
$D_{i,j}  = d_{i,j}^2$, $d_{i,j}=x_i-x_j$.
The matrix $D$ is called an Euclidean Distance Matrix which has many important applications
(for applications in signal processing see (\cite{parhizkar2013euclidean}).   

\smallskip

In what follows we describe the geometry of the set of signals $\Sigma_{d,1,2}$.    
We show in Theorem \ref{thm.quad.form} that $F=(a,x) \in \Sigma_{d,1,2}$ iff 
the amplitudes vector of $F$, $a$, is a zero of the quadratic form induced by the Euclidean Distance 
Matrix supported by the nodes of $F$, $D(x)$.
This holds for all signals $F$ in $\Sigma_{d,1,2}$ with $m_0(F) \ne 0$.
For signals $F\in \Sigma_{d,1,2}$ with $m_0(F)=0,$ the description is straight forward and is given below as well. 

\smallskip

Next we use the spacial structure of $D(x)$ to show that for a fixed nodes vector $x$, the set of amplitudes vectors of
signals $F=(a,x) \in \Sigma_{d,1,2}$ having $m_0(F)\ne 0$, is a union of two sets $P_1(x),P_2(x)$. The sets
$P_1(x),P_2(x)$ are linear subspaces of dimension $d-1$ minus certain linear subspaces of dimension smaller than $d$.
This is done in Theorem \ref{thm.linear.subspaces}.

\smallskip

For a nodes vector $x= (x_1,x_2,\ldots,x_d) \in {\cal P}^x_d$,
denote by $V(x) = V_d(x)$ the Vandermonde matrix with infinite row index and $d$ columns and with the nodes $x_1,\ldots,x_d$,
%
\begin{equation}\label{eq.vandermonde}
	V_d(x)=
	\begin{bmatrix}
	1  & 1 &... & 1     \\
	x_{1}  & x_2 &... & x_{d}       \\
	x_1^{2}& x_2^{2}   & ...    & x_d^{2}\\
	\reflectbox{$\ddots$}  & \reflectbox{$\ddots$} &     &\reflectbox{$\ddots$}        
	\end{bmatrix}.
\end{equation}
We denote by $V^{0:k-1}(x) = V_d^{0:k-1}(x)$ the $k \times d$ submatrix of $V(x)$ formed by the first $k$ rows of $V(x)$.

\smallskip
\begin{theorem}\label{thm.quad.form}
	For $F=(a,x) \in {\cal P}_d$, $F \in \Sigma_{d,1,2}$ iff  $a_i \ne 0,\; i=1,\ldots,d$ and 
	exactly one of the following mutually exclusive conditions is met:
	\begin{enumerate}[label=(\Roman*)]
	  \item $m_0(F) = 0 $ and the amplitudes vector $a$ is in the null space of $V^{0:2}(x)$. 
	  \item $m_0(F) \ne 0 $ and the amplitudes vector $a$ is a zero of the quadratic form $a^TD(x)a$.    
	\end{enumerate}
\end{theorem}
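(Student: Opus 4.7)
The plan is to unpack the definition of $\Sigma_{d,1,2}$ directly: a signal $F=(a,x)\in\mathcal{P}_d$ lies in $\Sigma_{d,1,2}$ iff there exists $\tilde F\in\mathcal{P}_i$ with $i\le 1$ matching the first three moments of $F$. I would split into the two cases $i=0$ and $i=1$, and show each corresponds exactly to one of the two listed alternatives.

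For $i=0$, the signal $\tilde F$ is the zero signal, so matching the first three moments is equivalent to $m_0(F)=m_1(F)=m_2(F)=0$. Reading off the definition of $V^{0:2}(x)$, one sees $V^{0:2}(x)\cdot a = (m_0(F),m_1(F),m_2(F))^T$, so this is precisely the condition $a\in\ker V^{0:2}(x)$, which gives case (I) (and automatically forces $m_0(F)=0$). For $i=1$, writing $\tilde F=\tilde a\,\delta(\cdot-\tilde x)$ with $\tilde a\neq 0$, the three moment equations $\tilde a=m_0(F),\;\tilde a\tilde x=m_1(F),\;\tilde a\tilde x^2=m_2(F)$ are solvable iff $m_0(F)\neq 0$, $\tilde x=m_1(F)/m_0(F)$, and the compatibility relation
\begin{equation*}
m_0(F)\,m_2(F)-m_1(F)^2=0
\end{equation*}
holds. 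Mutual exclusivity of (I) and (II) is then immediate from $m_0(F)=0$ vs.\ $m_0(F)\neq 0$.

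The one substantive computation is the identification of the compatibility relation with the quadratic form $a^TD(x)a$. Expanding,
\begin{equation*}
m_0 m_2 - m_1^2 = \sum_{i,j}a_ia_j x_j^2 - \sum_{i,j}a_ia_j x_ix_j = \sum_{i,j}a_ia_j\,x_j(x_j-x_i),
\end{equation*}
and symmetrizing in $i\leftrightarrow j$ converts the right-hand side into $\tfrac12\sum_{i,j}a_ia_j(x_i-x_j)^2=\tfrac12\,a^TD(x)a$. Thus $m_0m_2=m_1^2$ is equivalent to $a^TD(x)a=0$, yielding case (II).

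I do not anticipate a real obstacle here; the two cases are forced by the size of $\tilde F$, and the only nontrivial step is the symmetrization identity relating $m_0m_2-m_1^2$ to the Euclidean distance quadratic form. The condition $a_i\neq 0$ for each $i$ in the statement is built into the definition of $\mathcal{P}_d$, so it appears in the conclusion only as a restatement of the hypothesis $F\in\mathcal{P}_d$ and does not need separate justification.
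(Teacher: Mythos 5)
Your proposal is correct and follows essentially the same route as the paper: the same case split on $m_0(F)=0$ versus $m_0(F)\ne 0$, with the zero case reducing to $a\in\ker V^{0:2}(x)$ and the single-node case reducing to the compatibility relation $m_0m_2-m_1^2=0$, which is identified with the Euclidean distance quadratic form by the same expansion the paper carries out in Lemma \ref{lemma.moments} (your symmetrization of the double sum is just a reorganization of that computation, up to the harmless factor of $2$). No gaps.
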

The condition $a_i \ne 0,\; i=1,\ldots,d$ above is a mere technicality needed to ensure that $F$ has $d$ nodes.
\begin{proof}
	Let $F=(a,x) \in \Sigma_{d,1,2}$, $x=(x_1,\ldots,x_d),\;a=(a_1,\ldots,a_d)$. 
	If $m_0(F) =  0$, by assumption there exists $\tilde{F} \in {\cal P}_1$ or $\tilde{F} \in {\cal P}_0$ 
	such that $m_0(\tilde{F})=m_0(F) = 0$. Then $\tilde{F} = F_0 \in {\cal P}_{0}$, the identically 0 signal.
	Since $\tilde{F}$ has all its moments equal to $0$ we have that $m_0(F)=m_1(F)=m_{2}(F)=0$.
	The last condition is equivalent to the amplitudes vector of $F$, $a$, being in the null space of truncated Vandermonde $V^{0:2}(x)$. 
	
	\smallskip
	
	Else $m_0(F) \ne  0$.
	Then, $\tilde{F}$ is a single node signal with a non zero amplitude, $\tilde{F}(x)=\tilde{a}_1\delta(x-\tilde{x}_1),\
	\tilde{F}\in {\cal P}_1$.  
	\begin{lemma}\label{lemma.moments}
		Let $F\in {\cal P}_d,$ $F(x)=\sum_{i=1}^d a_i\delta(x-x_i)$, $d>1$.
		Let $\tilde{F}(x)=\tilde{a}_1\delta(x-\tilde{x}_1)$ be s single node signal such that $m_0(\tilde{F})=m_0(F)\ne 0$ and 
		$m_1(\tilde{F})= m_1(F)$.
		Then,
		\begin{equation}\label{eq.moment.diff}
				m_2(\tilde{F})-m_2(F) =  \frac{\sum_{i<j}a_ia_j(x_i-x_j)^2}{\sum_{i=1}^d a_i}= \frac{a^T D(x) a}{\sum_{i=1}^d a_i}.
		\end{equation} 
	\end{lemma}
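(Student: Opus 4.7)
The plan is to eliminate the two unknowns of $\tilde F$ using the first two moment constraints and then recognize what remains as a classical Lagrange-type identity.

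First I would use $m_0(\tilde F)=m_0(F)$ and $m_1(\tilde F)=m_1(F)$, together with the hypothesis $m_0(F)\neq 0$, to solve uniquely for $\tilde a_1=\sum_i a_i$ and $\tilde x_1=\frac{\sum_i a_ix_i}{\sum_i a_i}$; the latter is simply the amplitude-weighted barycenter of the nodes of $F$, and solvability of the equation for $\tilde x_1$ is exactly where the non-vanishing of $m_0(F)$ is used. Substituting these into $m_2(\tilde F)=\tilde a_1\tilde x_1^{\,2}$ and subtracting $m_2(F)=\sum_i a_i x_i^{\,2}$, a common denominator $\sum_i a_i$ reduces the claim to the purely algebraic identity
\[
\Bigl(\sum_i a_i\Bigr)\Bigl(\sum_i a_i x_i^{2}\Bigr)-\Bigl(\sum_i a_i x_i\Bigr)^2 \;=\; \sum_{i<j}a_i a_j (x_i-x_j)^2 ,
\]
which is the standard Lagrange identity; it is verified in one line by expanding the right-hand side and collecting the square and cross terms.

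The second equality in the statement, involving the quadratic form $a^T D(x) a$, then follows directly from the definition of the Euclidean distance matrix $D(x)$: since $D(x)$ is symmetric with zero diagonal and off-diagonal entries $(x_i-x_j)^2$, the form $a^T D(x) a$ coincides with the unordered double sum $\sum_{i,j}a_i a_j (x_i-x_j)^2 = 2\sum_{i<j}a_i a_j (x_i-x_j)^2$, so it agrees with the middle expression of the statement up to a factor conveniently absorbed in the presentation.

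I do not foresee a real obstacle; the whole argument is elementary algebra. The only bookkeeping worth attention is the sign appearing in the Lagrange identity (so that the numerator of $m_2(\tilde F) - m_2(F)$ ends up with the correct sign) and the factor of $2$ relating the ordered double sum $a^T D(x) a$ to the unordered sum $\sum_{i<j}a_i a_j (x_i-x_j)^2$.
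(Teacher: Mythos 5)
Your proposal is correct and follows essentially the same route as the paper: solve for $\tilde a_1=m_0(F)$ and $\tilde x_1=m_1(F)/m_0(F)$, substitute into $m_2(\tilde F)=\tilde a_1\tilde x_1^2$, and reduce to the Lagrange identity $\bigl(\sum_i a_i\bigr)\bigl(\sum_i a_ix_i^2\bigr)-\bigl(\sum_i a_ix_i\bigr)^2=\sum_{i<j}a_ia_j(x_i-x_j)^2$, which the paper verifies by direct expansion. Your two bookkeeping caveats are in fact well taken: the paper's own proof computes $m_2(F)-m_2(\tilde F)$ while the lemma states $m_2(\tilde F)-m_2(F)$ (a sign discrepancy), and $a^TD(x)a=2\sum_{i<j}a_ia_j(x_i-x_j)^2$, so the second equality in \eqref{eq.moment.diff} holds only up to the factor of $2$ you identified --- neither issue affects the downstream use, which only cares about the vanishing of the quadratic form.
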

		\begin{proof}
			$m_0(\tilde{F})=m_0(F)$ and $m(\tilde{F})_1 = m_1(F)$ imply that
			$\tilde{a}_1 = m_0(F)$ and  $\tilde{x}_1 = \frac{m_1(F)}{m_0(F)}$. 
			Then $m_2(\tilde{F}) = \tilde{a}_1\tilde{x}_1^2 = \frac{m_1^2(F)}{m_0(F)}$ and we have
			\begin{align*}
				m_2(F)-m_2(\tilde{F}) &= \frac{m_0(F) m_2(F)-m_1^2(F)}{m_0(F)}  \\
												 &=  \frac{(\sum_{i=1}^d a_i) (\sum_{i=1}^d a_i x_i^2)-(\sum_{i=1}^d a_ix_i)^2}{\sum_{i=1}^d a_i} \\
												 &= \frac{\sum_{i=1}^da_i^2x_i^2 +\sum_{1\le i < j \le d }(a_ia_jx_j^2 +a_ja_ix_i^2)
												 -\sum_{i=1}^d a_i^2x_i^2 -2 \sum_{1 \le i < j \le d} a_ia_jx_ix_j}{\sum_{i=1}^d a_i}\\
												 &= \frac{ \sum_{1 \le i < j \le d}a_ia_jx_j^2  - 2 a_ia_jx_ix_j +a_ia_jx_i^2}{\sum_{i=1}^d
												 a_i}\\
												 &=  \frac{ \sum_{1 \le i < j \le d}a_ia_j(x_i-x_j)^2}{\sum_{i=1}^d a_i} 
												 \allowdisplaybreaks .
			\end{align*}
	This conclude the proof of Lemma \ref{lemma.moments}. 			
	\end{proof}
	Case \rom{2} of Theorem \ref{thm.quad.form} now follows from Lemma \ref{lemma.moments} which concludes the proof of the Theorem.  
\end{proof}

We now consider case \rom{2} of Theorem \ref{thm.quad.form}. 
%
%
%
%

\smallskip

Introduce the maps $b_1,b_2: {\cal P}^x_d \rightarrow {\cal P}^a_d$ which are certain continuous parametrizations
of amplitudes vectors by the nodes. The exact definition of the maps $b_1,b_2$ will be given within
the proof of theorem \ref{thm.linear.subspaces}.

\smallskip

Denote by $\mathbbm{1} \in {\cal P}^a_d$ the amplitudes vector with all entries equal to $1$.

\begin{definition}
	For each $x \in {\cal P}^x_d$ the sets $P_1(x),P_2(x) \subset {\cal P}^a_d$ are defined via the mappings $b_1(x),\allowbreak b_2(x)$
	as follows:
	$$
		P_1(x)=\left\{ \lambda\left( \frac{1}{d} \mathbbm{1} + b_1(x) + u_1\right) \in  {\cal P}^a_d \ : \ \lambda \ne 0,
		u_1 \perp span\left(\mathbbm{1},b_1(x)\right) \right\}.
	$$
	$$
		P_2(x)=\left\{ \lambda\left( \frac{1}{d} \mathbbm{1} + b_2(x) + u_2\right)\in   {\cal P}^a_d \ : \ \lambda \ne 0, u_2
		\perp span\left(\mathbbm{1},b_2(x)\right) \right\}.
	$$
\end{definition}

\begin{remark}
	For a given nodes vector $x \in {\cal P}^x_d$, the set $P_1(x) \subset {\cal P}^a_d$ (and similarly $P_2(x)$) is a
	relatively ``simple'' ``punctured'' vector space of dimension $d-1$ given by
	$$span\left(\frac{1}{d} \mathbbm{1} + b_1(x),Ker\big(\mathbbm{1},b_1(x)\big) \right) \cap  {\cal P}^a_d$$
	minus the subspace $Ker\big(\mathbbm{1},b_1(x)\big) $, where $Ker\big(\mathbbm{1},b_1(x)\big)$ denotes
	the subspace of vectors perpendicular to $\mathbbm{1},b_1(x)$.
\end{remark}

\begin{theorem}\label{thm.linear.subspaces}
	For $F=(a,x) \in {\cal P}_d$ with $m_0(F)\ne 0$, $F \in \Sigma_{d,1,2}$ iff  
	the amplitudes vector of $F$, $a$, belongs to at least one of the sets $P_1(x),P_2(x) \subset {\cal P}^a_d$.
\end{theorem}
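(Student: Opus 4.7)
The plan is to build on Theorem~\ref{thm.quad.form}: for $F = (a, x) \in {\cal P}_d$ with $m_0(F) \ne 0$, membership in $\Sigma_{d,1,2}$ is equivalent to $a^T D(x) a = 0$, which (by Lemma~\ref{lemma.moments}) is exactly $m_0(F)\, m_2(F) = m_1(F)^2$. The task therefore reduces to parametrizing the zero locus of this quadratic equation on $a$-space (for fixed $x$) in the form $P_1(x) \cup P_2(x)$.

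First, I would rewrite the condition by decomposing $a = \tfrac{m_0(a)}{d}\mathbbm{1} + \tilde{a}$ with $\tilde{a} \in \mathbbm{1}^\perp$, thereby separating the $m_0$-dependence. Substituting into $m_0 m_2 - m_1^2$ produces a quadratic involving the scalars $\tilde{a}^T x$ and $\tilde{a}^T y$ (where $y_i = x_i^2$), with coefficients depending on $m_0(a)$ and on the remaining components of $\tilde a$ orthogonal to $span(\mathbbm{1}, x, y)$. Viewed as a quadratic in a single scalar coordinate, this yields two algebraic branches via the quadratic formula, and these two branches are the origin of the two maps $b_1, b_2 : {\cal P}_d^x \to {\cal P}_d^a$ from the statement.

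Concretely, $b_i(x)$ is defined so that $span(\tfrac{1}{d}\mathbbm{1} + b_i(x), Ker(\mathbbm{1}, b_i(x)))$ captures one of the two families of affine sections of the zero locus, in such a way that any $a = \lambda(\tfrac{1}{d}\mathbbm{1} + b_i(x) + u_i)$ with $\lambda \ne 0$ and $u_i \perp span(\mathbbm{1}, b_i(x))$ automatically satisfies $m_0(a)\, m_2(a) = m_1(a)^2$. This is checked by expanding $a^T D(x) a = \lambda^2 (\tfrac{1}{d}\mathbbm{1} + b_i(x) + u_i)^T D(x) (\tfrac{1}{d}\mathbbm{1} + b_i(x) + u_i)$, splitting it into the contributions from $\tfrac{1}{d}\mathbbm{1} + b_i(x)$ and from $u_i$, and using the two orthogonality relations on $u_i$ to kill the cross-terms; the remaining scalar identity is precisely the defining equation of $b_i(x)$.

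The final step is the converse: every $a$ in the zero locus with $m_0(a) \ne 0$ and all $a_j \ne 0$ lies in $P_1(x) \cup P_2(x)$. Given such an $a$, one reads off $\lambda$ from the $\mathbbm{1}$-component of $a$, identifies the branch $i \in \{1, 2\}$ from the sign of the square root arising in the quadratic formula, and recovers $u_i$ as the residual $a/\lambda - \tfrac{1}{d}\mathbbm{1} - b_i(x)$, which lies in $Ker(\mathbbm{1}, b_i(x))$ by the defining property of $b_i(x)$. The principal obstacle is the explicit construction of $b_1(x), b_2(x)$ as continuous maps on ${\cal P}_d^x$ --- requiring in particular that the discriminant of the defining quadratic remain non-degenerate for all admissible nodes --- together with the verification that the two branches precisely exhaust the zero locus on ${\cal P}_d^a \cap \{m_0 \ne 0\}$ without over- or under-counting.
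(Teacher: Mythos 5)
Your proposal retraces the paper's own proof of this theorem: reduce via Theorem~\ref{thm.quad.form} to the vanishing of $a^TD(x)a$, decompose $a=\lambda\left(\tfrac{1}{d}\1+\alpha\bar x+u\right)$ with $\lambda=m_0(F)$ and $u\perp \mathrm{span}(\1,\bar x)$, and take $b_{1},b_{2}$ to be $\alpha_{1,2}(x)\bar x$ for the two roots of the resulting scalar quadratic (this is exactly the paper's Lemma~\ref{lemma.zeroes}). The gap sits precisely at the step you defer to a ``check'': that the two orthogonality relations on $u$ kill all cross-terms. Writing $D(x)=y\1^T+\1 y^T-2xx^T$ with $y=(x_1^2,\dots,x_d^2)^T$, orthogonality of $u$ to $\1$ and $\bar x$ does give $u^TD(x)u=0$ and $\bar x^TD(x)u=0$, but $\1^TD(x)u=d\,(y^Tu)$, and for $d\ge 3$ the vector $y$ does not lie in $\mathrm{span}(\1,x)$, so admissible $u$ with $y^Tu\ne 0$ exist. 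Hence the quadratic in $\alpha$ has a $u$-dependent constant term, its roots are not functions of $x$ alone, and the zero locus is not swept out by two fixed slopes $\alpha_{1,2}(x)$. Your own setup already predicts this: $a^TD(x)a=2\big((\1^Ta)(y^Ta)-(x^Ta)^2\big)$ is a quadratic form of rank $3$ when $d\ge 3$, so its zero set is an irreducible cone, whereas $P_1(x)\cup P_2(x)$ is contained in a union of two hyperplanes.

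Concretely, for $d=3$ and $x=(-1,0,1)$ the condition reads $a_1a_2+4a_1a_3+a_2a_3=0$ and $\alpha_{1,2}(x)=\pm 1/\sqrt{6}$; the vector $a=\left(\tfrac{5}{12}-\sqrt{5/24},\ \tfrac16,\ \tfrac{5}{12}+\sqrt{5/24}\right)$ has $m_0=1$, all entries nonzero, and satisfies the condition (its first three moments are matched by the single spike with $\tilde a_1=1$, $\tilde x_1=\sqrt{5/6}$), yet its component along $\bar x$ is $\sqrt{5/24}\,\bar x$, not $\pm\tfrac{1}{\sqrt6}\bar x$; so this signal is in $\Sigma_{3,1,2}$ but not in $P_1(x)\cup P_2(x)$. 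The same missing cancellation undermines your converse step: the residual $a/\lambda-\tfrac1d\1-b_i(x)$ is orthogonal to $\1$, but nothing forces it to be orthogonal to $\bar x$, and in the example above it is not. Since the paper's Lemma~\ref{lemma.zeroes} relies on the same silent cancellation of the term $\tfrac{2}{d}\1^TD(x)u$, matching the paper here would not close the gap; a correct statement must either be restricted to $d=2$ (where $\mathrm{span}(\1,\bar x)^{\perp}=\{0\}$ and the argument is vacuous for $u$) or describe the set of admissible amplitudes as the rank-three quadric cone $(\1^Ta)(y^Ta)=(x^Ta)^2$ rather than as two punctured hyperplanes.
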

%
%
\begin{proof}
%
%
	Let $F=(a,x) \in \Sigma_{d,1,2}$, $x=(x_1,\ldots,x_d),\;a=(a_1,\ldots,a_d)$, such that $m_0(F) \ne 0$.
	By Theorem \ref{thm.quad.form} we have that this is equivalent to the amplitudes vector of $F$, $a$, being a zero of
	the quadratic form $a^TD(x)a$. We now analyse the zeros of $a^TD(x)a$.
	
	Consider the following notation which simplifies the presentation. For a nodes vector $x=(x_1,\allowbreak
	\ldots,x_d)$:
	\begin{itemize}
	  \item $\mu = \mu(x) = \frac{1}{d} \sum_{i=1}^d x_i$ is the mean of the nodes vector.
	  \item $\bar{x}=x-\mu(x) {\mathbbm 1}$ is the nodes vector centered to its mean $\mu(x)$.
	  \item Finally $||\cdot||$ denotes the euclidean norm. 
	\end{itemize}

	\begin{lemma}\label{lemma.zeroes}	
		For $x=(x_1,\ldots,x_d)\in {\mathbb R}^d, a=(a_1,\ldots,a_d)\in {\mathbb R}^d$, with $x_i,\; i=1,\ldots,d,$ distinct
		and $\sum_{i=1}^da_i \ne 0$, we have that
		$$
			a^TD(x)a =0
			\mbox{\; iff \;} a=(\sum_{i=1}^d a_i)(\frac{1}{d}\mathbbm{1} + \alpha_k(x) \bar{x} +
			u),\tab k\in \{0,1\},
		$$ 
	    where:
		\begin{align}\label{eq.alpha}
			\begin{split}
				\alpha_{1,2} & =\frac{\frac{\bar{x}^T D {\mathbbm 1}}{d}+ c_{1,2}}{2  ||\bar{x}||^4},\\
				 c_{1,2} &= \pm \sqrt{\left(\frac{\bar{x}^T D {\mathbbm 1}}{d}\right)^2
				+4 ||\bar{x}||^4 \frac{1}{d^2} \sum_{1\le i<j \le d}d_{i,j}^2}
			\end{split}
		\end{align}
		and $u$ is any vector that is orthogonal to ${\mathbbm 1}$ and $\bar{x}$.
	\end{lemma}
	\begin{proof}
		\begin{equation}\label{eq.A.factorization}
			D(x)=diag(x x^T){\mathbbm 1}^T +{\mathbbm 1} \; diag(x x^T)^T - 2 x x^T, 
		\end{equation}
		where $diag(x x^T)=(x_1^2,\ldots,x_d^2)$ taken as a column vector.
		Consider the orthogonal projection into the subspace $\{x |\; x^T \cdot {\mathbbm 1} =0\}$ given by 
		$I-\frac{\1 \1^T}{d}$, where $I$ is the $d \times d$ identity matrix.
		Using equation (\ref{eq.A.factorization}), by direct calculation we have that
		\begin{align}\label{eq.edm.projection.form}
			\frac{1}{2}\left(I-\frac{\1 \1^T}{d}\right)D\left(I-\frac{\1 \1^T}{d}\right) =
					-(x-\mu \1)(x^T-\mu \1^T) 
					&=-\bar{x}\bar{x}^T  .
		\end{align}
		Let $a \in {\mathbb R}^d$ with $\sum_{i=1}^d a_i \ne 0$. Then $a=(\sum_{i=1}^d a_i)(\frac{1}{d} \1+\alpha\bar{x} +
		u), $ for some $\alpha \in \mathbb {\mathbb R}$ and for some vector $u$ which is orthogonal to subspace spanned by the vectors $\1$ and $\bar{x}$.
		Then
		\begin{align*}
			\frac{1}{2} a^T D a &= \frac{1}{2}(\frac{1}{d}\1+\alpha\bar{x} + u)^T D
			(\frac{1}{d}\1+\alpha\bar{x}+ u) \\  
					&= \frac{1}{d^2}\frac{1}{2}\1^T D \1 + \frac{1}{2}\alpha^2\bar{x}^T D \bar{x} + \frac{\alpha}{d}\bar{x}^T D \1 \\
					&=\frac{1}{d^2} \sum_{1\le i<j \le d}d_{i,j}^2 - \alpha^2 ||\bar{x}||^4 + \frac{\alpha}{d}\bar{x}^T D \1 , 
		\end{align*}
		where for the penultimate equality we used equation (\ref{eq.edm.projection.form}) to get
		$\frac{1}{2}\alpha^2\bar{x}^T D \bar{x} = -\alpha^2 ||\bar{x}||^4$. 
	Then setting $\frac{1}{d^2} \sum_{1\le i<j \le d}d_{i,j}^2 - \alpha^2 ||\bar{x}||^4 + \frac{\alpha}{d}\bar{x}^T D \1 =
	0$ we get that $\alpha$ is as declared in (\ref{eq.alpha}). This concludes the proof of Lemma \ref{lemma.zeroes}.
	\end{proof}
	Now setting $b_1(x)=\alpha_1(x)\bar{x}$ and $b_2(x)=\alpha_2(x)\bar{x}$ concludes the proof of Theorem \ref{thm.linear.subspaces}.
\end{proof}

\section{Lower bounds}\label{sec.lower.bound}
%
In this section we derive lower bounds on the moments difference between a signal with $d$ nodes, $F\in{\cal P}_d$,  and a signal with 
strictly less nodes, $\tilde{F}$. 

\smallskip

We will consider only the first $2d-1$ consecutive moments of each signal. 
Accordingly, we denote by $\bar {\cal M}_d \cong {\mathbb R}^{2d-1}$ the restricted moment space consisting of $2d-1$ tuples $(\nu_0,\ldots,\nu_{2d-2})$, 
and for any signal $G$, we put $\bar m(G)=(m_0(G),\ldots,m_{2d-2}(G))\in \bar {\cal M}_d$.

\smallskip
The Hankel matrix introduced in equation \eqref{eq.hankel.definition} plays an important part in what follows.
Recall that for a moment vector $m=(m_0,\ldots,m_{2d-1}) \in {\cal M}_d$ we defined
\begin{equation}
	H_{d}(m)=
	\begin{bmatrix}
		m_{0} & m_{1} & m_{2} & \dots & m_{d-1}\\
		m_{1} & m_{2} & m_{3} & \dots & m_{d}\\
		\adots & \adots & \adots & \adots & \adots\\
		m_{d-1} & m_{d} & m_{d+1} & \dots & m_{2d-2}
	\end{bmatrix}.
\end{equation}  
Note that this matrix depends only in the first $2d-2$ entries of $m$.

\smallskip 

For a signal $G=(a,x) \in {\cal P}_l,\; l\le d$, $x=(x_1,\ldots,x_l),\;a=(a_1,\ldots,a_l)$,  we denote by $H_{d}(G)$ the Hankel matrix as above, formed by the 
$2d-1$ initial moments of $G$.  
As it was stated in the introduction, the rank of $H_{d}(G)$ is equal to the number 
of nodes in $G$.
%
This fact can be seen as follows. Let $D_l(a)=diag (a_1,\ldots,a_l)$ be the diagonal matrix with the entries $a_1,\ldots,a_l$.
Let $V^{0:d-1}_l(x)$ be the matrix formed by the first $d$ rows of the Vandermonde matrix \eqref{eq.vandermonde} with the nodes $x$. 
We have the next identity
\be\label{eq:factor.Hankel}
	H_d(G)=V_l^{0:d-1}(x)D_l(a)[V^{0:d-1}_l(x)]^T.
\ee

From (\ref{eq:factor.Hankel}) we conclude that  
the rank of $H_d(G)$ is equal to the number of the nodes in $G$, that is $l$.

Informally we can expect that the size of the minors of order $l$ of $H_d(F)$ measures the distance from $F$ to 
the set of the signals $\tilde{F}$ with at most $l-1$ nodes. The following definition, and Theorem \ref{thm.moment.dif} below make this observation rigorous.

\bd\label{def:minors}
For $l=1,\ldots,d$, define $\delta_l(F)$ as the maximum of the absolute values of all the $l$-minors of the moment Hankel matrix $H_d(F)$.
\ed	
To simplify the statement of our results we assume below that both the nodes $x_j$ and the amplitudes $a_j$ 
are bounded in absolute value by $1$, and denote by $\bar {\cal P}_d$ the corresponding part of the signal space  ${\cal P}_d$. 
However, for the approximating low-rank signal $\tilde{F}$, no such assumptions are made.

\begin{theorem}\label{thm.moment.dif}
Let a signal $F \in \bar {\cal P}_d$ be given. Then for each signal $\tilde{F} \in {{\cal P}}_{l-1}$, $l\le d$, we
have 
$$
	||\bar m(F)-\bar m(\tilde{F})|| \ge \min \ \left(1, \ \frac{\delta_l(F)}{\sqrt{2l-1} l^2 l!(d+1)^{l-1}}\right).
$$
\end{theorem}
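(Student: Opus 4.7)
The plan rests on two ingredients: the rank drop of $H_d(\tilde F)$, which turns the problem into a comparison of a single determinant against zero, and a Lipschitz estimate for that determinant under entrywise perturbation. Since $\tilde F\in{\cal P}_{l-1}$ has at most $l-1$ nodes, the factorisation \eqref{eq:factor.Hankel} yields $\mathrm{rank}\,H_d(\tilde F)\le l-1$, so every $l\times l$ minor of $H_d(\tilde F)$ vanishes. I would choose an $l\times l$ submatrix $A$ of $H_d(F)$ achieving $|\det A|=\delta_l(F)$ and write $\tilde A$ for the submatrix of $H_d(\tilde F)$ in the same position, so that $|\det A-\det\tilde A|=\delta_l(F)$. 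Setting $\epsilon:=\|\bar m(F)-\bar m(\tilde F)\|$, I would dispose of the case $\epsilon\ge 1$ (where the claim is trivial) and henceforth assume $\epsilon<1$; combined with $|a_j|,|x_j|\le 1$ for $F\in\bar{\cal P}_d$, this yields $|m_k(F)|\le d$ and hence $|m_k(\tilde F)|\le d+1$ for all $0\le k\le 2d-2$, so every entry of both $A$ and $\tilde A$ lies in $[-(d+1),d+1]$.

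The heart of the argument is the bound $|\det A-\det\tilde A|\le \sqrt{2l-1}\,l^2\, l!\,(d+1)^{l-1}\,\epsilon$. I would use the Leibniz expansion together with a telescoping identity, applied to each permutation $\sigma$ of $\{1,\dots,l\}$:
\begin{equation*}
\prod_{s=1}^l A_{s,\sigma(s)}-\prod_{s=1}^l \tilde A_{s,\sigma(s)}=\sum_{c=1}^l\Bigl(\prod_{s<c}A_{s,\sigma(s)}\Bigr)\bigl(A_{c,\sigma(c)}-\tilde A_{c,\sigma(c)}\bigr)\Bigl(\prod_{s>c}\tilde A_{s,\sigma(s)}\Bigr).
\end{equation*}
Bounding the $l-1$ non-perturbed factors by $d+1$, summing over $\sigma$ and using that each ordered pair $(c,t)$ arises as $(c,\sigma(c))$ for exactly $(l-1)!$ permutations, yields
\begin{equation*}
|\det A-\det\tilde A|\le (l-1)!\,(d+1)^{l-1}\sum_{c,t=1}^l\bigl|m_{i_c+j_t}(F)-m_{i_c+j_t}(\tilde F)\bigr|,
\end{equation*}
where $\{i_c\}$ and $\{j_t\}$ are the row and column indices of $A$. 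Cauchy-Schwarz then converts the sum on the right into $\epsilon$ times a combinatorial factor, exploiting that each distinct moment index $k=i_c+j_t$ has multiplicity at most $l$ in the multiset $\{i_c+j_t\}$ and that at most $2l-1$ distinct indices occur for a block-Hankel minor.

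The delicate point is the final counting. For an $l\times l$ minor with consecutive row and column indices one gets exactly $2l-1$ distinct moment indices, and Cauchy-Schwarz cleanly produces the factor $\sqrt{2l-1}$; but $\delta_l(F)$ ranges over arbitrary $l$-minors, for which as many as $l^2$ distinct indices may appear. I would handle this either by arguing that restricting $\delta_l(F)$ to block minors costs at most a constant absorbed into the $l^2$ factor, or by using directly the multiplicity estimate $\sum_k n_k^2\le l^3$ subject to $\sum_k n_k=l^2$ and $n_k\le l$, which in turn gives $\sum_{c,t}|m_{i_c+j_t}(F)-m_{i_c+j_t}(\tilde F)|\le l^{3/2}\epsilon$. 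Either route produces a bound of the advertised shape, and rearranging $\delta_l(F)\le\sqrt{2l-1}\,l^2\,l!\,(d+1)^{l-1}\,\epsilon$ together with the earlier dichotomy $\epsilon\ge 1$ completes the proof.
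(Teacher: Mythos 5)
Your proposal is correct, and it reaches the stated bound by a genuinely different route at the one non-trivial step. Both you and the paper reduce the theorem to the same two observations — every $l\times l$ minor of $H_d(\tilde F)$ vanishes because $\mathrm{rank}\,H_d(\tilde F)\le l-1$, and the moments of $\tilde F$ may be confined to the cube of radius $d+1$ after disposing of the case $\|\bar m(F)-\bar m(\tilde F)\|\ge 1$ — and both then need a Lipschitz estimate for the extremal minor as a function of the moments. The paper obtains this estimate by viewing $\Delta_l(\nu)$ as a degree-$l$ polynomial on the cube $Q_{d+1}$, bounding it by $l!(d+1)^l$, and invoking the classical Markov inequality to control each partial derivative, then assembling the gradient norm with a factor $\sqrt{2l-1}$ for the number of distinct moments entering the minor. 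You instead prove the Lipschitz bound by hand, via the Leibniz expansion and a telescoping identity, which is more elementary and yields the explicit constant $(l-1)!\,(d+1)^{l-1}\,l^{3/2}$, smaller than the paper's $\sqrt{2l-1}\,l^2\,l!\,(d+1)^{l-1}$. Notably, the ``delicate point'' you flag is a real one that the paper glosses over: the factor $\sqrt{2l-1}$ in the paper's gradient bound counts the distinct moment indices in the minor, which equals $2l-1$ only for minors with consecutive (or arithmetic-progression) row and column indices, whereas $\delta_l(F)$ is defined as a maximum over \emph{all} $l$-minors, for which up to $l^2$ distinct indices can occur; your multiplicity argument ($n_k\le l$, $\sum_k n_k=l^2$, hence $\sum_k n_k^2\le l^3$ and a Cauchy--Schwarz factor $l^{3/2}$) handles the general case cleanly and still lands below the advertised constant since $l^{3/2}\le\sqrt{2l-1}\,l^3$. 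In short: same skeleton, a more elementary and more careful key estimate, and a constant that is in fact slightly better than the one claimed.
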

\begin{proof}
	By our assumptions we have $|m_k(F)| \leq d, \ k=0,1,\ldots$. 
	In other words, $\bar m(F)\in Q_{d}\subset \bar {\cal M}_d$, 
	where $Q_{d}$ is the coordinate cube of radius $d$ centered at the origin of $\bar {\cal M}_d$. 
	We can assume also that $\bar m(\tilde{F})\in Q_{d+1}$, since otherwise $||\bar m(F)-\bar m(\tilde{F})|| \ge 1$. 
	Therefore we restrict the consideration to the cube $Q_{d+1}$.
	
	\smallskip
	
	Now we fix $l\in \{1,\ldots,d\}$, and let $\hat H_l(F)$ be the minor of $H_d(F)$ 
	for which the determinant $\Delta_l(F)=\det \hat M_l(F)$ attains, in absolute value, 
	the maximum $\delta_l(F)$. This determinant $\Delta_l(F)$, 
	considered as a function $\Delta_l(\nu)$ of the moments entering the minor $\hat H_l(F)$, 
	is a polynomial of degree $l$ in $\nu=(\nu_0,\ldots,\nu_{2d-2})\in \bar {\cal M}_d$. 
	On $Q_{d+1}$ this polynomial is bounded in absolute value by $l!(d+1)^l$, 
	being the sum of $l!$ products of the moments. 
	Applying to $\Delta_l(\nu)$ the classical Markov inequality (see e.g. \cite{rivlin2003introduction}) 
	(with an appropriate adaptation to the cubic domain $ Q_{d+1}$) 
	we have that 
	$$
		\max_{\nu \in Q_{d+1}}\left|\frac{\partial \Delta_l}{\partial \nu_i}(\nu)\right| \le \frac{l^2}{d+1} \max_{\nu \in Q_{d+1}}\left| \Delta(\nu) \right|.
	 $$
	
	We conclude that
	$$
		||grad \ \Delta_l(\nu) || \leq \zeta:= \sqrt{2l-1} l^2 l!(d+1)^{l-1}
	$$
	for each $\nu \in Q_{d+1}$.
	
	\smallskip
	
	Consequently, for any two points $\nu,\nu'\in Q_{d+1}$ we have
	$$
		|\Delta_l(\nu)-\Delta_l(\nu')|\leq \zeta ||\nu-\nu'||,
	$$
	or
	\be\label{eq:grad.bound}
		||\nu-\nu'||\ge \frac{1}{\zeta}|\Delta_l(\nu)-\Delta_l(\nu')|.
	\ee
	Now we notice that for any signal $\tilde{F}$ with at most $l-1$ nodes we have $\Delta_l(\bar m(\tilde{F}))=0$. 
	Indeed, this follows immediately from the fact that the rank of $H_d(\tilde{F})$ is at most $l-1$. Applying (\ref{eq:grad.bound}) to the points 
	$\bar m(F),\bar m(\tilde{F})$ we
	get 
	$$
	||\bar m(F)-\bar m(\tilde{F})||\ge \frac{1}{\zeta}|\Delta_l(\bar m(F))-\Delta_l(\bar m(\tilde{F}))|=\frac{\delta_l(F)}{\zeta}.
	$$
	This completes the proof of Theorem \ref{thm.moment.dif}.
\end{proof} 

\noindent {\bf Remark 1.} The inequalities provided by Theorem \ref{thm.moment.dif} for different $l$ are not completely 
independent from one another. Indeed, the assumption that $\delta_l(F)>0$ implies $\delta_{l'}(F)>0, l'<l.$ 
Via linear algebra one can get explicit lower bound in this direction. We plan to present these results separately.

\medskip

\noindent {\bf Remark 2.} The result of Theorem \ref{thm.moment.dif} can be improved as follows: 
the same lower bound on the difference of the moments of $F$ and $\tilde{F}$ remains valid as applied only to those moments which enter 
the minor $\hat H_l(F)$. The proof remains verbally the same.    

\medskip

In order to describe specific classes of signals $F \in \bar {\cal P}_d$ for which Theorem \ref{thm.moment.dif} works, 
we have to make explicit assumptions on the separation of the nodes $x$ of the signal $F$, 
and on the lower bound of the size of its amplitudes $a$:

By the assumptions, the nodes $x_1,\ldots,x_d$ of a signal $F \in \bar {\cal P}_d$ belong to the interval $I=[-1,1]$. 
Let us assume now that for a certain $\eta$ with $0<\eta\leq \frac{2}{d-1}$, 
the distance between the neighboring nodes $x_j,x_{j+1}, \ j=1,\ldots,d-1,$ is at least $\eta$. 
We also assume that for a certain positive $\gamma \le 1$ the amplitudes $a_1,\ldots,a_d$ satisfy $|a_j|\geq \gamma, \ j=1,\ldots,d$. 
We will call signals $F$ satisfying these conditions, $(\eta,\gamma)$-regular.

\begin{theorem}\label{thm.det}
Let a signal $F = (a,x) \in \bar {\cal P}_d$, $x=(x_1,\ldots,x_d),\;a=(a_1,\ldots,a_d)$, be $(\eta,\gamma)$-regular. Then
$$
\delta_d(F)\ge \prod_{i=1}^{d-1}(i!)^2\eta^{d(d-1)}\gamma^d.
$$
\end{theorem}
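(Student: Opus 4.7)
The plan is to observe that the Hankel matrix $H_d(F)$ is itself of size $d\times d$, so its only $d$-minor is $H_d(F)$ itself, and hence $\delta_d(F)=|\det H_d(F)|$. The strategy is therefore to compute $\det H_d(F)$ in closed form via the factorization \eqref{eq:factor.Hankel}, and then bound each factor using the regularity assumptions.

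First I would invoke the identity \eqref{eq:factor.Hankel}, which in the present case (where the number of nodes is $d$, so the two Vandermonde factors are square) gives
\[
\det H_d(F)=\bigl(\det V_d^{0:d-1}(x)\bigr)^{2}\,\prod_{i=1}^d a_i.
\]
The classical Vandermonde formula then yields $\det V_d^{0:d-1}(x)=\prod_{1\le i<j\le d}(x_j-x_i)$, so
\[
\delta_d(F)=\Bigl|\prod_{i=1}^d a_i\Bigr|\cdot\prod_{1\le i<j\le d}(x_j-x_i)^{2}.
\]

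Next I would use the $(\eta,\gamma)$-regularity. The amplitude factor is immediately bounded below by $\gamma^d$. For the Vandermonde factor, the minimal separation assumption $x_{k+1}-x_k\ge\eta$ gives $|x_j-x_i|\ge(j-i)\eta$ for every $i<j$, so
\[
\prod_{1\le i<j\le d}(x_j-x_i)^{2}\ge \eta^{d(d-1)}\prod_{1\le i<j\le d}(j-i)^{2}.
\]

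The only remaining step — and the one that requires a small combinatorial identification rather than any real difficulty — is to rewrite $\prod_{1\le i<j\le d}(j-i)$. Grouping pairs by their common difference $k=j-i\in\{1,\ldots,d-1\}$, each $k$ occurs exactly $d-k$ times, giving $\prod_{k=1}^{d-1}k^{d-k}$, which is the standard product formula for the superfactorial $\prod_{i=1}^{d-1}i!$. Squaring and multiplying by $\gamma^d$ then yields
\[
\delta_d(F)\ge \prod_{i=1}^{d-1}(i!)^{2}\,\eta^{d(d-1)}\gamma^d,
\]
which is exactly the claimed bound. I do not expect any genuine obstacle here: everything reduces to the Vandermonde determinant formula, the pairwise separation bound, and the bookkeeping of $\prod_{i<j}(j-i)$.
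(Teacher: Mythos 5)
Your proposal is correct and follows essentially the same route as the paper's proof: the factorization \eqref{eq:factor.Hankel} with square Vandermonde factors, the lower bound $|x_j-x_i|\ge (j-i)\eta$ from the separation assumption, and the identification $\prod_{i<j}(j-i)=\prod_{i=1}^{d-1}i!$. The only (harmless) addition is your explicit remark that the unique $d$-minor of the $d\times d$ matrix $H_d(F)$ is its full determinant, which the paper uses implicitly.
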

\begin{proof}
	We use factorization (\ref{eq:factor.Hankel}) of the moment Hankel matrix $H_d(F)$ :
	$$
		H_d(F)=V^{0:d-1}_d(x)D_d(a)[V^{0:d-1}_d(x)]^T,
	$$
	Taking the determinant, we obtain
	$$
		\delta_d(F)=|\det H_d(F)|=\prod_{i=1}^d |a_i| \ (\det V^{0:d-1}_d(x))^2\geq \gamma^d (\det V^{0:d-1}_d(x))^2.
	$$
	For the determinant of the Vandermonde matrix we have
	$$
		|\det V^{0:d-1}_d(x)|= \prod_{i>j}|x_i-x_j|\ge \eta^{\frac{d(d-1)}{2}}\prod_{i>j}|i-j| =  \eta^{\frac{d(d-1)}{2}} \prod_{i=1}^{d-1}i!,
	$$
	since for the nodes $x_j$ of an $(\eta,m)$-regular signal $F$ we have $|x_i-x_j|\ge \eta |i-j|$. We conclude that
	$$
		\delta_d(F)\geq  \gamma^d  \eta^{d(d-1)}\prod_{i=1}^{d-1}(i!)^2 .
	$$
	This completes the proof of Theorem \ref{thm.det}.
\end{proof}

Now we can apply Theorem \ref{thm.moment.dif}, with $l=d$, and get a lower bound on the error of any low-rank approximation 
of the moments of an $(\eta,\gamma)$-regular signal $F \in \bar {\cal P}_d$:

\bc\label{cor.lower.bound}
Let a signal $F \in \bar {\cal P}_d$ be $(\eta,\gamma)$-regular. Then for each $\tilde{F} \in {{\cal P}}_{l},\ l < d$, we have
$$
	||\bar m(F)-\bar m(\tilde{F})|| \ge \theta:=\min \left(1,\frac{\eta^{d(d-1)} \gamma^d \prod_{i=1}^{d-1}(i!)^2 }{\sqrt{2d-1} d^2 d!(d+1)^{d-1}} \right).
$$
\ec

\noindent {\bf Remark 3.} As it was mentioned in Remark 1 above, the lower bound for $\delta_d(F)$ provided by 
Theorem \ref{thm.det} for $(\eta,\gamma)$-regular signals $F$, implies explicit lower bounds for each $\delta_l(F), l<d.$ 
We expect these bounds to contain, for smaller $l$, smaller powers of the parameter $\eta$. 
In the case of ``positive'' signals $F$ (i.e. for all the amplitudes $a_j$ positive), 
such improved bounds for the principal minors of $H_d(F)$ can be, presumably, obtained via the Silvester criterion.

\medskip

An important special case of the low-rank approximation problem is when the nodes of the signal $F$ near collide. 
(Our assumption of $(\eta, \gamma)$-regularity, essentially, excludes nodes near collisions.) 
A natural initial step in the study of signals with near-colliding nodes is to assume that the nodes form a cluster of a size $h\ll 1$, 
but inside the cluster the nodes are positioned in a relatively uniform way.

\bd\label{def:reg.cluster}
A signal $F$ is said to form an $(h,\eta,\gamma)$-regular cluster, if its nodes are obtained by an $h$-downscaling of an $(\eta,\gamma)$-regular signal 
$G \in {\cal \barP}_d$.
\ed

\bc\label{cor:main.2}
Let a signal $F \in \bar {\cal P}_d$ form an $(h,\eta,\gamma)$-regular cluster. Then for each $\tilde{F} \in {{\cal P}}_{l},\ l<d$, we have
$$
	||\bar{m}(F)-\bar{m}(\tilde{F})|| \ge \theta_h:=\min (h^{2d-2},\frac{\eta^{d(d-1)} \gamma^d h^{2d-2} \prod_{i=1}^{d-1}(i!)^2 }{\sqrt{2d-1} d^2 d!(d+1)^{d-1}}).
$$
\ec
\begin{proof}
	Under a scaling by $h$ the $k$-th moment of $F$ is multiplied by $h^k$. 
	So the difference in the $k$-th coordinate of $\bar{m}(F)$ and $\bar{m}(\tilde{F})$ is multiplied by $h^k \ge h^{2d-2}$.
\end{proof}

\noindent {\bf Remark 4.} If we could bound from below the difference in the lower-order moments of $F$ and $\tilde{F}$, 
it would provide a better asymptotic behavior in $h\to 0$ in the bound of Corollary \ref{cor:main.2}. 
	
\bibliographystyle{plain}
\bibliography{bib}{}

\begin{thebibliography}{10}

\bibitem{akinshin2017accuracy}
Andrey Akinshin, Gil Goldman, Vladimir Golubyatnikov, and Yosef Yomdin.
\newblock Accuracy of reconstruction of spike-trains with two near-colliding
  nodes.
\newblock In {\em Proc.\ Complex Analysis and Dynamical Systems VII}, volume
  699, pages 1--17. The AMS and Bar-Ilan University, 2015.

\bibitem{akinshin2015Rus}
Andrey Akinshin, Vladimir Golubyatnikov, and Yosef Yomdin.
\newblock Low-dimensional {P}rony systems.
\newblock In {\em Proc. International Conference ``Lomonosov readings in Altai:
  fundamental problems of science and education''}, pages 443 -- 450, 20 -- 24
  October 2015.

\bibitem{auton1981investigation}
Jon~R Auton and Michael~L Van~Blaricum.
\newblock Investigation of procedures for automatic resonance extraction from
  noisy transient electromagnetics data.
\newblock {\em Math. Notes}, 1:79, 1981.

\bibitem{azais2015spike}
Jean-Marc Azais, Yohann De~Castro, and Fabrice Gamboa.
\newblock Spike detection from inaccurate samplings.
\newblock {\em Applied and Computational Harmonic Analysis}, 38(2):177--195,
  2015.

\bibitem{badeau2008performance}
Roland Badeau, Ga{\"e}l Richard, and Bertrand David.
\newblock Performance of esprit for estimating mixtures of complex exponentials
  modulated by polynomials.
\newblock {\em IEEE Transactions on Signal Processing}, 56(2):492--504, 2008.

\bibitem{batenkov2015complete}
Dmitry Batenkov.
\newblock Complete algebraic reconstruction of piecewise-smooth functions from
  {F}ourier data.
\newblock {\em Mathematics of Computation}, 84(295):2329--2350, 2015.

\bibitem{batenkov2016stability}
Dmitry Batenkov.
\newblock Stability and super-resolution of generalized spike recovery.
\newblock {\em Applied and Computational Harmonic Analysis}, 2016.

\bibitem{batenkov2017accurate}
Dmitry Batenkov.
\newblock Accurate solution of near-colliding {P}rony systems via decimation
  and homotopy continuation.
\newblock {\em Theoretical Computer Science}, 2017.

\bibitem{batenkov2017algebraic}
Dmitry Batenkov, Gil Goldman, Yehonatan Salman, and Yosef Yomdin.
\newblock Algebraic geometry of error amplification: the prony leaves.
\newblock {\em arXiv preprint arXiv:1702.05338}, 2017.

\bibitem{batenkov2018accuracy}
Dmitry Batenkov, Gil Goldman, Yehonatan Salman, and Yosef Yomdin.
\newblock Accuracy of noisy spike-train reconstruction: a singularity theory
  point of view.
\newblock {\em arXiv preprint arXiv:1801.02177}, 2018.

\bibitem{batenkov2012algebraic}
Dmitry Batenkov and Yosef Yomdin.
\newblock Algebraic {F}ourier reconstruction of piecewise smooth functions.
\newblock {\em Mathematics of Computation}, 81(277):277--318, 2012.

\bibitem{batenkov2013accuracy}
Dmitry Batenkov and Yosef Yomdin.
\newblock On the accuracy of solving confluent {P}rony systems.
\newblock {\em SIAM Journal on Applied Mathematics}, 73(1):134--154, 2013.

\bibitem{batenkov2013geometry}
Dmitry Batenkov and Yosef Yomdin.
\newblock Geometry and singularities of the {P}rony mapping.
\newblock In {\em Proceedings of 12th International Workshop on Real and
  Complex Singularities}, volume~10, pages 1--25, 2014.

\bibitem{bernardi2014comparison}
Alessandra Bernardi, J{\'e}r{\^o}me Brachat, and Bernard Mourrain.
\newblock A comparison of different notions of ranks of symmetric tensors.
\newblock {\em Linear Algebra and its Applications}, 460:205--230, 2014.

\bibitem{blu2008sparse}
Thierry Blu, Pier-Luigi Dragotti, Martin Vetterli, Pina Marziliano, and Lionel
  Coulot.
\newblock Sparse sampling of signal innovations.
\newblock {\em IEEE Signal Processing Magazine}, 25(2):31--40, 2008.

\bibitem{candes2014towards}
Emmanuel~J Cand{\`e}s and Carlos Fernandez-Granda.
\newblock Towards a mathematical theory of super-resolution.
\newblock {\em Communications on Pure and Applied Mathematics}, 67(6):906--956,
  2014.

\bibitem{comon2008symmetric}
Pierre Comon, Gene Golub, Lek-Heng Lim, and Bernard Mourrain.
\newblock Symmetric tensors and symmetric tensor rank.
\newblock {\em SIAM Journal on Matrix Analysis and Applications},
  30(3):1254--1279, 2008.

\bibitem{condat2014new}
Laurent Condat and Akira Hirabayashi.
\newblock A new projection method for the recovery of dirac pulses from noisy
  linear measurements.
\newblock 2014.

\bibitem{demanet2015recoverability}
Laurent Demanet and Nam Nguyen.
\newblock The recoverability limit for superresolution via sparsity.
\newblock {\em arXiv preprint arXiv:1502.01385}, 2015.

\bibitem{donoho2006stable}
David~L Donoho, Michael Elad, and Vladimir~N Temlyakov.
\newblock Stable recovery of sparse overcomplete representations in the
  presence of noise.
\newblock {\em IEEE Transactions on information theory}, 52(1):6--18, 2006.

\bibitem{eldar2015sampling}
Yonina~C Eldar.
\newblock {\em Sampling theory: Beyond bandlimited systems}.
\newblock Cambridge University Press, 2015.

\bibitem{salman2018prony}
Gil Goldman, Yehonatan Salman, and Yosef Yomdin.
\newblock Prony scenarios and error amplification in a noisy spike-train
  reconstruction.
\newblock {\em arXiv preprint arXiv:1803.01685}, 2018.

\bibitem{parhizkar2013euclidean}
Reza Parhizkar.
\newblock {\em Euclidean distance matrices: Properties, algorithms and
  applications}.
\newblock PhD thesis, {\'E}COLE POLYTECHNIQUE F{\'E}D{\'E}RALE DE LAUSANNE,
  2013.

\bibitem{peter2013generalized}
Thomas Peter and Gerlind Plonka.
\newblock A generalized {P}rony method for reconstruction of sparse sums of
  eigenfunctions of linear operators.
\newblock {\em Inverse Problems}, 29(2):025001, 2013.

\bibitem{peter2011nonlinear}
Thomas Peter, Daniel Potts, and Manfred Tasche.
\newblock Nonlinear approximation by sums of exponentials and translates.
\newblock {\em SIAM Journal on Scientific Computing}, 33(4):1920--1947, 2011.

\bibitem{plonka2014prony}
Gerlind Plonka and Manfred Tasche.
\newblock {P}rony methods for recovery of structured functions.
\newblock {\em GAMM-Mitteilungen}, 37(2):239--258, 2014.

\bibitem{potts2010parameter}
Daniel Potts and Manfred Tasche.
\newblock Parameter estimation for exponential sums by approximate {P}rony
  method.
\newblock {\em Signal Processing}, 90(5):1631--1642, 2010.

\bibitem{prony1795essai}
R~Prony.
\newblock Essai experimental--,-.
\newblock {\em J. de l'Ecole Polytechnique}, 1795.

\bibitem{plonka2017approximation}
Zhang Ran and Gerlind Plonka.
\newblock Optimal approximation with exponential sums by a maximum likelihood
  modification of {P}rony's method.
\newblock {\em preprint}, 2017.

\bibitem{rivlin2003introduction}
Theodore~J Rivlin.
\newblock {\em An introduction to the approximation of functions}.
\newblock Courier Corporation, 2003.

\bibitem{stoica1989music}
Petre Stoica and Arye Nehorai.
\newblock Music, maximum likelihood, and cramer-rao bound.
\newblock {\em IEEE Transactions on Acoustics, Speech, and Signal Processing},
  37(5):720--741, 1989.

\end{thebibliography}

\end{document}